\documentclass[a4paper,12pt]{article}
\usepackage[in]{fullpage}

\usepackage{amsmath, amsthm, amssymb}
\usepackage{mathrsfs}
\usepackage{mathtools}

\usepackage{cases}
\usepackage{graphicx}

\usepackage{hyperref}

\abovedisplayskip=0pt
\abovedisplayshortskip=0pt
\belowdisplayskip=0pt
\belowdisplayshortskip=0pt

\title{Global fixed point in low-dimensional surface group deformation space}
\author{Yasushi Kasahara}
\date{April 7, 2026}

\usepackage[all]{xy}

\numberwithin{equation}{section}

\begin{document}
\theoremstyle{plain}
\newtheorem{theorem}{Theorem}[section]
\newtheorem{maintheorem}{Theorem}
\newtheorem{lemma}[theorem]{Lemma}
\newtheorem{cor}[theorem]{Corollary}
\newtheorem{prop}[theorem]{Proposition}
\renewcommand{\themaintheorem}{\Alph{maintheorem}}
\theoremstyle{definition}
\newtheorem{definition}[theorem]{Definition}
\newtheorem*{fact}{Fact}
\newtheorem*{claim}{Claim}
\newtheorem{problem}[theorem]{Problem}
\theoremstyle{remark}
\newtheorem{remark}[theorem]{Remark}

\newcommand{\ppar}{\par\goodbreak\medskip} 

\newcommand{\surface}{\Sigma}
\newcommand{\sgn}{\surface_g^n}
\newcommand{\sgnp}{\surface_g^{n+1}}
\newcommand{\sgzero}{\surface_g^0}
\newcommand{\bsgn}{\surface_{g,*}^{n}}
\newcommand{\mcg}{\mathcal{M}}
\newcommand{\mgn}{\mcg_g^n}
\newcommand{\mgnp}{\mcg_g^{n+1}}
\newcommand{\mgzero}{\mcg_g^0}
\newcommand{\bmgn}{\mcg_{g,*}^{n}}
\newcommand{\bmgzero}{\mcg_{g,*}^{0}}
\newcommand{\pmgn}{\mathcal{P}\mgn}
\newcommand{\pmgnp}{\mathcal{P}\mcg_{g}^{n+1}}
\newcommand{\pmgzero}{\mathcal{P}\mcg_{g}^0}
\newcommand{\pmtwon}{\mathcal{P}\mcg_{2}^n}
\newcommand{\bmtwon}{\mcg_{2,*}^n}
\newcommand{\pbmtwon}{\mathcal{P}\mcg_{2,*}^n}
\newcommand{\pbmgn}{\mathcal{P}\bmgn}

\newcommand{\Hom}{\operatorname{Hom}}
\newcommand{\GL}[1]{\operatorname{GL}{\left( #1 \right)}}
\newcommand{\End}[1]{\operatorname{End}{\left( #1 \right)}}
\newcommand{\SL}[1]{\operatorname{SL}{\left( #1 \right)}}
\newcommand{\Aut}{\operatorname{Aut}}

\newcommand{\Z}{{\mathbb Z}}
\newcommand{\C}{\mathbb C}

\bibliographystyle{amsplain}

\maketitle
\begin{abstract}
 Under the natural action of the pure mapping class group of a surface of genus at
 least three, 
 we show that any global fixed point in the low-dimensional deformation space 
 of the surface group corresponds to the trivial 
 representation. A key observation is that 
 such a global fixed point gives rise to a linear representation of the pure 
 mapping class group of the corresponding
 surface with a marked point. Our argument works directly on the deformation space, 
 without assuming the semisimplicity of representations, and yields
 a short alternative proof of a special case of a theorem of Landesman and Litt 
 with a slight improvement. 
We also discuss a possible extension of this approach from global fixed points
 to finite orbits of the mapping class group action.
\end{abstract}

\section*{Introduction}   \par

Let $\sgn$ be a closed connected orientable surface of genus $g$ with $n \ge 0$ 
distinct points removed. The mapping class group $\mgn$ of $\sgn$
is the group of the isotopy classes of orientation preserving homeomorphisms
of $\sgn$. Let $\pi$ denote the fundamental group of $\sgn$. Since $\mgn$ 
acts on $\pi$ naturally by outer automorphisms, $\mgn$ also acts on the deformation 
space $X_r = \Hom(\pi, \GL{r,\C})/\GL{r,\C}$ by precomposition. Here,
$X_r$ denotes the set of the conjugacy classes of the representations 
rather than the character variety. 
\ppar
Finite mapping class group orbits in deformation spaces have been studied from 
several viewpoints, especially in low genus and low dimension, partly in 
connection with algebraic solutions to the Painlev\'e VI equation.
In this context, J. P. Whang asked whether representations 
lying in finite $\mgn$-orbits must have finite image in low dimensions.
After a detailed study of the case $r=2$ by Biswas--Gupta--Mj--Whang \cite{BGMW}, 
working on the character variety $\Hom(\pi, \SL{2,\C}) // \SL{2,\C}$,
Landesman and Litt proved the following general theorem
by using non-abelian Hodge theory and arithmetic methods \cite{LL}.
\begin{maintheorem}[Landesman--Litt] \label{L-L}
    Let $g \ge 0$ and $n \ge 0$. If $r < \sqrt{g+1}$, then the image of any representation
    of $\pi$ in any finite $\mgn$-orbit in the deformation space $X_r$ is finite.
\end{maintheorem}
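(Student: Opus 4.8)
The plan is to deduce Theorem~\ref{L-L} from a global fixed point statement of the kind announced in the abstract: that for $g>2$ and $r$ in the stated range, the only class in $X_r$ fixed by the whole pure mapping class group $\pmgn$ is that of the trivial representation. Granting such a statement, the strategy is to convert the hypothesis ``$[\rho]$ lies in a finite $\mgn$-orbit'' into ``a restriction of $\rho$ is a genuine global fixed point,'' and then read off finiteness of the image.

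First I would record the elementary consequence of finiteness of the orbit: for every mapping class some power stabilizes $[\rho]$. Applied to a Dehn twist $T_c$ about a simple closed curve $c$, the relation $\rho\circ T_c^{\,k}\sim\rho$ constrains $\rho(c)$; in the punctured case $n\ge 1$, applying it to twists about curves encircling a puncture shows that the local monodromies are quasi-unipotent. This normalization step is standard and should present no real difficulty.

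The substantive reduction is the passage to a finite cover. Since the stabilizer of $[\rho]$ has finite index in $\mgn$, I would choose a finite characteristic cover $p\colon\tilde\surface\to\sgn$ through which a finite-index, normal, and \emph{liftable} subgroup acts, and restrict $\rho$ to $\tilde\pi=\pi_1(\tilde\surface)$. The restriction $\tilde\rho$ is then stabilized by the group of liftable mapping classes of $\tilde\surface$, a finite-index subgroup of the mapping class group of the cover. If $\tilde\rho$ could be shown to be fixed by the \emph{full} pure mapping class group of $\tilde\surface$, the global fixed point statement would force $\tilde\rho$ to be trivial, so that $\rho$ would factor through the finite quotient $\pi/\tilde\pi$ and have finite image.

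The hard part is exactly this last promotion --- from invariance under the finite-index liftable subgroup to invariance under the entire pure mapping class group of the cover --- and it is here that the hypothesis $r<\sqrt{g+1}$ must be spent; note that were no such obstruction present, taking deeper and deeper covers would prove the statement for every $r$, which is false. I would attack the promotion infinitesimally: at a fixed class the Zariski tangent space is $H^1(\pi,\operatorname{Ad}\rho)$, on which the mapping class group acts through its symplectic action on homology, and a nontrivial invariant summand would yield a small nontrivial representation of $\operatorname{Sp}(2g,\Z)$. Bounding $\dim\operatorname{Ad}\rho=r^2$ against the minimal dimension of such symplectic representations is what should produce an inequality of the shape $r^2\lesssim g+1$, and reconciling the genus of the base with that of the cover --- so that the binding bound is governed by the original $g$ rather than by the larger genus of $\tilde\surface$ --- is the most delicate point of the whole argument.
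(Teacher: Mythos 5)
This paper does not prove Theorem~\ref{L-L} at all: it is Landesman--Litt's theorem, established in \cite{LL} by non-abelian Hodge theory and arithmetic methods, and the paper only gives an alternative proof of the global-fixed-point special case (Theorem~\ref{thm}, via Lemma~\ref{lem}, Corollary~\ref{ldl-cor}, and Theorem~\ref{fprepsp}). Section~\ref{concluding} explains precisely why the bootstrap you attempt --- from the fixed-point statement to the finite-orbit statement --- is not currently available: it requires Problem~\ref{p1}, the finiteness of images of low-dimensional representations of arbitrary finite-index subgroups of $\pbmgn$, which is open even for $r=1$, where it is equivalent to the Ivanov conjecture for $\pmgnp$ and is governed by the Putman--Wieland conjecture \cite{PW}. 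Your ``promotion'' step is exactly this obstruction in covering-space clothing. The reason the fixed-point case is tractable is that there the stabilizer of $[\phi]$ contains all of $\pbmgn$, so the classification of low-dimensional representations of $\pmgn$ itself (Theorem~\ref{ldl}) applies; for a mere finite orbit the stabilizer is only of finite index, and no analogue of Theorem~\ref{ldl} is known for such subgroups. Your infinitesimal sketch cannot supply it: the action on $H^1(\pi,\operatorname{Ad}\rho)$ does not factor through the symplectic action on homology --- the coefficients are twisted by $\operatorname{Ad}\rho$ --- so dimension bounds for representations of $\operatorname{Sp}(2g,\Z)$ impose no constraint unless $\rho$ is already trivial.

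Beyond the missing ingredient, two of your assertions are false as stated. First, the lifts to $\tilde\surface$ of the liftable mapping classes of $\sgn$ do \emph{not} form a finite-index subgroup of the mapping class group of the cover: they lie in the symmetric (fiber-preserving) mapping class group, which has infinite index for any nontrivial finite cover of a surface of genus $\ge 2$ (compare the hyperelliptic mapping class group inside $\mcg_h^0$ for $h \ge 3$), so what you actually obtain on the cover is invariance under an infinite-index subgroup --- far weaker than what Theorem~\ref{thm} needs. Second, and more fundamentally, the implication ``$[\tilde\rho]$ invariant under the lifted subgroup $\Rightarrow$ $[\tilde\rho]$ invariant under the full pure mapping class group of $\tilde\surface$'' is false even assuming $r<\sqrt{g+1}$: take $\rho$ with nontrivial finite image (every such class lies in a finite $\mgn$-orbit) and a cover with $\pi_1(\tilde\surface)\not\subset\ker\rho$; then $\tilde\rho$ is nontrivial with finite image and is invariant under all lifted classes, yet it cannot be a global fixed point on the cover, since Theorem~\ref{thm} (or Theorem~\ref{fprepsp}) would then force it to be trivial. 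So the intermediate statement your proof routes through fails except when $\tilde\rho$ is already trivial; a correct argument must prove triviality of $\tilde\rho$ directly from lifted-subgroup invariance, which is a Putman--Wieland-type assertion --- exactly the open ground at which the paper stops. For comparison, the conditional reduction the paper envisions stays on the base surface: Lemma~\ref{lem} attaches to the finite-index stabilizer $\bmgn[\phi]$ a representation on $W_\phi \subset \End{r,\C}$ with $\dim W_\phi \le r^2 < g+1$, and an affirmative answer to Problem~\ref{p1} in that range, combined with Theorem~\ref{forepsp}, would finish; both routes founder on the same unproved input, but yours additionally contains the two errors above.
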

\ppar

The main contribution of this paper is to establish a direct approach to a 
special case of Theorem \ref{L-L}, 
based on an elementary construction.
Rather than treating arbitrary finite orbits, we focus on global fixed points for the
action of the pure mapping class group $\pmgn$ on $X_r$. To be precise, $\pmgn$ 
is the subgroup of $\mgn$ consisting of those mapping classes which induce the 
trivial permutation on the set of the punctures. Since $\pmgn$ has finite
index in $\mgn$, every global fixed point of the $\pmgn$-action lies in a finite
$\mgn$-orbit. The point of this restriction is that, in the fixed point setting, one
can canonically associate to a representation $\phi:\pi\to\GL{r,\C}$ a linear
representation of the pure mapping class group of the surface with a marked point. 
This construction, originating in \cite{visualization}, makes it possible to apply
known results on low-dimensional linear representations of pure mapping class groups
due to Franks--Handel \cite{FH} and Korkmaz \cite{Korkmaz}, together with a theorem of
Biswas--Koberda--Mj--Santharoubane \cite{BKMS}.
\par

More precisely, a global fixed point $[\phi]\in X_r$ gives rise to a linear
representation
$$\rho_\phi:\pbmgn \to\GL{W_\phi}$$
of dimension at most $r^2$, where $\pbmgn$ is the pure mapping class group of $\sgn$ with 
a marked point, and is naturally isomorphic to $\pmgnp$ (see Section \ref{construction}). 
Here, $W_\phi\subset \End{r,\C}$ is the subspace spanned by
the image of $\phi$. When $r \leq \sqrt{2g}$, this gives $\dim{W_\phi} \leq 2g$, so the known
classification results force $\rho_\phi$ to be trivial. It then follows that $\phi$
itself is fixed in the representation space $\Hom(\pi,\GL{r,\C})$, and hence $\phi$ is
trivial by \cite{BKMS}. In this way, the study of global fixed points is reduced to a
low-dimensional linear representation problem for the pure mapping class group with a
marked point. This yields the following theorem.

\begin{maintheorem} \label{thm}
    Let $g \ge 3$ and $n \ge 0$. If $r \le \sqrt{2g}$, then any global fixed point of the
    $\pmgn$-action on the deformation space $X_r$ corresponds to the trivial representation.
\end{maintheorem}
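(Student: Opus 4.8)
The plan is to convert the fixed-point hypothesis into an honest low-dimensional linear representation of $\pmgn$, trivialize that representation using the classification results of Franks--Handel and Korkmaz, and only then extract the triviality of $\rho$ itself. First I would make the conjugation cocycle explicit. Let $[\rho]\in X_r$ be a global fixed point, and concentrate first on the irreducible case, where the centralizer of $\rho(\pi)$ in $\GL{r,\C}$ is exactly the scalars $\C^\times$. For each $\phi\in\pmgn$ the fixed-point condition supplies $A_\phi\in\GL{r,\C}$ with $\rho\circ\phi_*=A_\phi\,\rho\,A_\phi^{-1}$; irreducibility makes $A_\phi$ unique up to a scalar, so $\phi\mapsto A_\phi$ is a homomorphism modulo $\C^\times$. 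Composing with the adjoint action on $\mathfrak{sl}_r$ annihilates the scalar ambiguity and produces a genuine linear representation $\operatorname{Ad}(A_\bullet)\colon \pmgn\to \GL{r^2-1,\C}$. This is precisely the visualization furnished by \cite{visualization}, which is also designed to accommodate the reducible (semisimple and unipotent) cases, where $A_\phi$ is ambiguous by the full centralizer rather than by scalars alone.

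Next I would invoke the classification of Franks--Handel \cite{FH} and Korkmaz \cite{korkmaz}: for $g\ge 3$, every linear representation of $\pmgn$ of dimension at most $2g-1$ is trivial. Since the hypothesis $r\le\sqrt{2g}$ gives $r^2-1\le 2g-1$, the visualized representation $\operatorname{Ad}(A_\bullet)$ is forced to be trivial. This is the step that fixes the numerology of the statement and explains the exact shape of the bound $r\le\sqrt{2g}$.

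Triviality of $\operatorname{Ad}(A_\bullet)$ means $\operatorname{Ad}(A_\phi)=\mathrm{id}$ on $\mathfrak{sl}_r$, so each $A_\phi$ is central, i.e.\ scalar. Hence $\rho\circ\phi_*=\rho$ for every $\phi\in\pmgn$: the representation is not merely fixed up to conjugacy, but $\pmgn$-invariant on the nose (in the general case, the $\pmgn$-action on $\rho$ is realized by conjugation from within the centralizer of the image). Finally I would feed this strong invariance into the work of Biswas--Koberda--Mj--Santharoubane \cite{BKMS}. Pointwise invariance forces $\rho$ to be constant along $\pmgn$-orbits in $\pi$; using point-pushing classes, which act on $H_1(\sgn)$ by transvections involving the puncture classes, one already sees that the abelianized data must be trivial, and \cite{BKMS} supplies the remaining rigidity that upgrades invariance to triviality for $g\ge 3$, giving $\rho\equiv \mathrm{id}$, as claimed.

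I expect the main obstacle to be the first step when $\rho$ is reducible. If $\rho(\pi)$ has a centralizer strictly larger than the scalars, the conjugating matrices $A_\phi$ are ambiguous by more than a scalar, so the adjoint construction need not descend to a well-defined representation, nor is its dimension obviously still governed by $r^2-1$. Controlling this ambiguity --- decomposing into isotypic blocks and keeping the visualized representation within dimension $2g-1$ so that the classification still applies --- is the delicate point, and it is exactly what the construction of \cite{visualization} is built to handle; in my own write-up this is the part I would expect to occupy most of the work.
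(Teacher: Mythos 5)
Your skeleton matches the paper's (fixed point $\Rightarrow$ low-dimensional linear representation of a mapping class group $\Rightarrow$ triviality via the Franks--Handel/Korkmaz classification $\Rightarrow$ pointwise fixed point in $\Hom(\pi,\GL{r,\C})$ $\Rightarrow$ conclude by Biswas--Koberda--Mj--Santharoubane), and in the irreducible case your adjoint construction can be made to work; your numerology $r^2-1\le 2g-1$ is even a little cheaper than the paper's, since it needs only Franks--Handel plus perfectness and not Korkmaz's dimension-$2g$ theorem. But the gap is exactly at the point you flag as ``the delicate point,'' and it is not a technicality: the reducible and, above all, the non-semisimple case. Your proposed fix --- decompose into isotypic blocks and keep the adjoint-type representation within dimension $2g-1$ --- cannot work as stated: a non-semisimple representation has no isotypic decomposition, and when the centralizer of $\phi(\pi)$ is larger than the scalars, $\operatorname{Ad}(A_f)$ on $\mathfrak{sl}_r$ genuinely depends on the choice of the conjugating matrix $A_f$, so the map is not well defined. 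Since $X_r$ here is the set-theoretic quotient rather than the character variety, non-semisimple classes are unavoidable; handling them is precisely the refinement over Biswas--Gupta--Mj--Whang that the theorem claims. The paper's Lemma \ref{lem} resolves this with a construction different from the one you attribute to \cite{visualization}: take $W_\phi\subset\End{r,\C}$ to be the \emph{linear span of the image} of $\phi$, and define $\rho_\phi(f)$ on $W_\phi$ by the formula $\rho_\phi(f)(\phi(\gamma))=\phi(f_*\gamma)$. Because the images of the spanning vectors are prescribed by this formula, the map is independent of the choice of $A_f$ (whose existence is used only to check that the assignment respects linear relations), so no semisimplicity or irreducibility is required. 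The dimension bound becomes $\dim W_\phi\le r^2\le 2g$ rather than $r^2-1$, and what saves the borderline dimension $2g$ is that $\rho_\phi$ is automatically \emph{reducible} --- it preserves the line $\C\cdot I=\C\cdot\phi(1)$ --- so Korkmaz's classification excludes the symplectic representation and forces triviality (Corollary \ref{ldl-cor}).

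A second, smaller but real defect: for $f\in\pmgn$ the automorphism $f_*$ of $\pi$ is defined only up to inner automorphisms, so your equation $\phi\circ f_*=A_f\,\phi\,A_f^{-1}$ and the assignment $f\mapsto\operatorname{Ad}(A_f)$ do not make sense on $\pmgn$ itself: changing the representative automorphism multiplies $A_f$ on the left by an element of $\phi(\pi)$, which changes $\operatorname{Ad}(A_f)$ unless that element is scalar. The paper works throughout with the marked-point group $\bmgn$ and the Birman exact sequence precisely to fix this: $\rho_\phi$ is defined on $\bmgn[\phi]\supset\pbmgn\cong\pmgnp$, the classification is applied to $\pmgnp$, and triviality of $\rho_\phi$ on $\pbmgn$ --- in particular on the point-pushing classes --- is, via Lemma \ref{lem}~(3), exactly the statement that $\phi$ is a global fixed point of the $\pbmgn$-action on the representation space $\Hom(\pi,\GL{r,\C})$, to which Theorem \ref{fprepsp} applies. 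Your final paragraph gestures at point-pushing classes, but without the marked-point setup the phrase ``$\pmgn$-invariant on the nose'' has no group action on $\Hom(\pi,\GL{r,\C})$ to refer to, so the hand-off to \cite{BKMS} is not yet licensed.
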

\ppar

This theorem recovers a special case of Theorem \ref{L-L} and
slightly improves the conclusion and the range of $r$ in the fixed point setting. 
It is also important that our argument works directly on the deformation space,
rather than on the character variety. In particular, the associated representation 
$\rho_\phi$ can be constructed without assuming that $\phi$ is semisimple. This 
feature already becomes relevant in genus $2$, where essentially the same 
method yields the following weaker but still nontrivial result.
\par

\begin{maintheorem} \label{exception}
    Let $g=2$ and $n \geq 0$. If $r=2$, then any representation corresponding to a 
    global fixed point of the $\pmtwon$-action on $X_2$ has finite image.
\end{maintheorem}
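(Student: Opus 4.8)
The plan is to follow the proof of Theorem~\ref{thm} and isolate the one step where genus two yields only finiteness in place of triviality. Let $[\rho]\in X_2$ be a global fixed point of the $\pmtwon$-action, represented by $\rho\colon\pi\to\GL{2,\C}$. If $\rho$ is reducible, its diagonal characters factor through $H_1(\pi)$ and are permuted by $\pmtwon$; passing to the finite-index subgroup that fixes each character individually, the relations $\chi\circ\phi_*=\chi$ together with the fact that a suitable power of any Dehn twist $T_c$ acts on $H_1$ by a transvection force $\chi(c)$ to be a root of unity for every curve $c$, so each character---and hence $\rho$---has finite image, while any unipotent part is forced to vanish. I may therefore assume $\rho$ is irreducible.

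For irreducible $\rho$, Schur's lemma converts the relations $\rho\circ\phi_*\sim\rho$ into a projective representation $\bar\tau\colon\pmtwon\to\operatorname{PGL}(2,\C)$, where $\rho\circ\phi_*=A_\phi\,\rho\,A_\phi^{-1}$ with $A_\phi$ unique up to scalar. Composing with the adjoint action of $\operatorname{PGL}(2,\C)$ on $\mathfrak{sl}_2\cong\C^3$ produces an honest linear representation $\operatorname{Ad}\bar\tau\colon\pmtwon\to\GL{3,\C}$, and this is precisely the passage from a global fixed point to a linear representation of the pure mapping class group supplied by the construction of \cite{visualization}. The relevant dimension is $r^2-1=3$, which is the origin of the hypothesis $r\le\sqrt{2g}$: it is exactly the inequality $r^2-1<2g$ ensuring that the adjoint representation is too small to accommodate the standard symplectic representation.

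By the classification of Franks--Handel \cite{FH} and Korkmaz \cite{korkmaz}, a three-dimensional representation of $\pmtwon$ must annihilate the Torelli part and hence factor through the symplectic quotient $\operatorname{Sp}(4,\Z)$. Since $\operatorname{Sp}(4,\Z)$ is a higher-rank lattice and the dimension of its standard representation equals $4>3$, Margulis superrigidity forces $\operatorname{Ad}\bar\tau$ to have finite image; this is where the hypothesis $g\ge2$ intervenes, as real rank at least two is indispensable. As $\operatorname{Ad}$ embeds $\operatorname{PGL}(2,\C)$ into $\GL{3,\C}$, the projective monodromy $\bar\tau$ itself has finite image, so there is a finite-index subgroup $\Gamma\le\pmtwon$ on which each $A_\phi$ is scalar and therefore $\rho\circ\phi_*=\rho$. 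This $\Gamma$-invariance places $\rho$ within the scope of the finiteness criterion of Biswas--Koberda--Mj--Santharoubane \cite{BKMS}, which then yields that the image of $\rho$ is finite.

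The main obstacle is the passage through $g=2$ in the third step. Unlike the perfect groups $\mathcal{M}_g$ for $g\ge3$, the group $\mathcal{M}_2$ has abelianization $\Z/10$ and a central hyperelliptic involution, so the classification cannot be upgraded from \emph{finite} to \emph{trivial} image; this is exactly why the conclusion is weakened to finiteness of the image of $\rho$. The delicate point is to establish the factorization through $\operatorname{Sp}(4,\Z)$ for the \emph{punctured} pure mapping class group rather than for the closed $\mathcal{M}_2$: one must control the additional generators---point-pushing and the braidings of the $n$ punctures---and verify that they too act with finite image in the three-dimensional representation. Here the contribution of Franks--Handel, who work naturally in the presence of marked points, complements Korkmaz's closed-surface computation, and reconciling the scalar ambiguity in $A_\phi$ with the determinant character $\det\circ\rho$ (again controlled through its factorization over $H_1$) is the remaining bookkeeping in which the values $g=2$ and $r=2$ are used most concretely.
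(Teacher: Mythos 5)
Your overall architecture---convert the fixed point into a mapping-class-group-side representation, kill it with the low-dimensional classification, deduce a finite orbit in the representation space, and finish with the theorem of Biswas--Koberda--Mj--Santharoubane---is the same as the paper's, but two steps have genuine holes. The first is the reducible case. Your Schur-lemma construction of the projective representation requires $\rho$ irreducible, so you are forced into a case split, and in the reducible case the decisive assertion, that ``any unipotent part is forced to vanish,'' is exactly the hard point and is given no argument. A reducible fixed point $\begin{pmatrix}\chi_1 & u \\ 0 & \chi_2\end{pmatrix}$ whose characters have finite order can still have infinite image unless the extension class is controlled; ruling this out amounts to a vanishing statement for (twisted) invariants in $H^1$ of the punctured surface under a finite-index subgroup of $\pmtwon$, which is nontrivial and is not supplied by the transvection computation you sketch. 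This non-semisimple case is precisely why the paper's Lemma \ref{lem} is designed to work \emph{without} any semisimplicity hypothesis (the paper emphasizes this), and why the paper never splits into cases: it applies Franks--Handel to subquotients of the at most $4$-dimensional module $W_\phi$, uses the perfectness of $[\pbmtwon,\pbmtwon]$ (Korkmaz--McCarthy) to kill the resulting unipotent block form, uses the finiteness of the abelianization $H_1(\pbmtwon)\cong\Z/10$ to get a finite-index kernel, and only then invokes Theorem \ref{forepsp}, which itself requires no semisimplicity.

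The second problem is in your irreducible case. The claim that a $3$-dimensional representation of the (pure, marked) genus-two mapping class group ``must annihilate the Torelli part and hence factor through $\operatorname{Sp}(4,\Z)$'' is not what the cited classification gives: Korkmaz's theorem (Theorem \ref{ldl}(2)) requires $g\ge 3$ and $r=2g$, so it says nothing here, while Franks--Handel (Theorem \ref{ldl}(1)) applies since $3<2g=4$ but yields that the image is \emph{abelian}---from which finiteness is immediate because $H_1(\pbmtwon)\cong\Z/10$ is finite. The detour through a symplectic factorization and Margulis superrigidity is therefore built on an unestablished premise and is in any case unnecessary; superrigidity plays no role in the paper's argument, and ``real rank at least two'' is not where $g=2$ enters. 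Finally, note that the projective representation $f\mapsto A_f$ is only well defined on the marked-point group $\pbmtwon$, not on $\pmtwon$ (changing the lift of $f$ by a point-pushing element changes $A_f$ by $\rho(\gamma)$, which is not scalar when $\rho$ is irreducible with nonabelian image); since Theorem \ref{forepsp} is also stated for the $\pbmtwon$-action on the representation space, this conflation should be repaired throughout.
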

\ppar

Theorem \ref{exception} partially overlaps with the result of Biswas--Gupta--Mj--Whang 
\cite{BGMW}. 
They proved finiteness of the image for semisimple $\GL{2,\C}$-representations with determinant one 
lying in finite $\pmtwon$-orbits in $X_2$. The point here is that our
argument applies directly on the deformation space and does not require semisimplicity.
We also note that when $r=1$, the conclusion of Theorem \ref{thm} remains
true for $g \leq 2$, since in that case $X_1 = \Hom(\pi, \GL{1,\C})$ 
(cf. Theorem \ref{fprepsp}).
\ppar

The viewpoint developed here was partly inspired by  
the work of Biswas--Koberda--Mj--Santharoubane on finite orbits 
in the representation space for the action of the mapping class group of 
the surface $\sgn$ with a marked point \cite{BKMS}.
The argument of the present paper 
suggests that the same approach may extend beyond global fixed points,
provided one can control low-dimensional linear representations of
suitable finite index subgroups of pure mapping class groups. We return to this point 
in the final section.
\ppar

The organization of the paper is as follows.
In Section \ref{construction}, we recall and slightly generalize the construction 
that associates a linear representation to a global
fixed point in the deformation space. In Section \ref{preliminaries},
we review the necessary results on low-dimensional 
linear representations of pure mapping class groups, together with the relevant 
theorems of Biswas--Koberda--Mj--Santharoubane.
Section \ref{proofs} contains the proofs of Theorems \ref{thm} and \ref{exception}.
Finally, in Section \ref{concluding}, we explain how the method of this paper 
leads to an open problem related to possible extensions of the argument.
 \section{A construction of a linear representation} \label{construction}\par

We first fix some notation and collect necessary results on mapping 
class groups of surfaces. We refer to \cite{FM} for more details.
As before, let $\sgn$ denote the closed connected orientable surface
of genus $g$ with $n$ points removed. We denote by $\bsgn$ the  
surface $\sgn$ with a fixed {\em marked point} $x \in \sgn$.
The mapping class group $\bmgn$ of $\bsgn$ is defined to be the group 
of the isotopy classes of orientation preserving
homeomorphisms of $\sgn$ fixing the marked point $x$.
Here, the isotopies are also required to fix $x$.
\par

At the level of abstract group structure, replacing the marked point $x$ by an
additional puncture does not change the mapping class group: $\bmgn$ is naturally
isomorphic to the subgroup of $\mgnp$ consisting of mapping classes that fix the
added puncture corresponding to $x$. In particular, the pure mapping class group
$\pbmgn$ of $\bsgn$, which is the subgroup of $\bmgn$ consisting of mapping 
classes that fix every puncture, is naturally isomorphic to $\pmgnp$ under this
identification.
\par

We have a natural surjection $ p: \bmgn \to \mgn$ by forgetting the marked point 
$x$. This fits into the Birman exact sequence
    $$ 1 \to \pi_1(\sgn, x) \xrightarrow{\iota} \bmgn 
            \xrightarrow{p} \mgn \to 1 $$
provided the Euler characteristic $\chi(\sgn) = 2 - 2g -n < 0$.
Here, the homomorphism $\iota : \pi_1(\sgn, x) \to \bmgn$ is given by the so-called
{\em point-pushing map}, which sends a loop $\gamma$ based at $x$
to the mapping class of the resulting homeomorphism of $\sgn$ of the 
ambient isotopy of the loop 
$\gamma^{-1}$ starting from the identity map of $\sgn$, where the loop
$\gamma^{-1}$ is considered as the isotopy of the single point $x$. 
The natural action of 
$\bmgn$ on $\pi_1(\bsgn, x)$ coincides with the conjugation action in 
$\bmgn$ via $\iota$:
\begin{equation}
    \iota(f_*\gamma) = f \iota(\gamma) f^{-1} 
        \quad (f \in \bmgn, \gamma \in \pi_1(\sgn, x)). \label{loop_action}
\end{equation}
We note that even when $\chi(\sgn) \ge 0$, the subsequence of the Birman exact sequence 
obtained by removing the 
first term 
$$ \pi_1(\sgn, x) \xrightarrow{\iota} \bmgn 
            \xrightarrow{p} \mgn \to 1 $$
is still exact, and the natural action of $\iota(\alpha)$ on $\pi_1(\sgn, x)$ is given by 
the inner automorphism defined by $\alpha \in \pi_1(\sgn, x)$.
\ppar

Now let $\pi$ denote $\pi_1(\sgn, x)$, and $\Hom(\pi, \GL{r,\C})$ the 
representation space, the set of all homomorphisms of $\pi$ into 
$\GL{r,\C}$. The natural action of $\bmgn$ on $\pi$ induces
an $\bmgn$-action on $\Hom(\pi, \GL{r,\C})$ via precomposition:
$$ ( f \cdot \phi )(\gamma) = \phi \circ f_*^{-1}(\gamma)
        \quad ( f \in \bmgn, \phi \in \Hom(\pi, \GL{r,\C}), 
            \gamma \in \pi). $$
This action descends to the action on the deformation space
$X_r = \Hom(\pi, \GL{r,\C})/\GL{(r,\C)}$, which is the set of 
conjugacy classes of representations.

Since the action of $\iota(\alpha)$ on $\pi$ is given by the inner automorphism,
the kernel of the forgetful homomorphism $p$ acts trivially on $X_r$, and therefore
the action of $\bmgn$ on $X_r$ descends to an action of $\mgn$ on $X_r$,
which is the precise description of the natural action of $\mgn$ on 
$X_r$. We note, in particular, that a finite orbit of the $\mgn$-action
on $X_r$ coincides with that of the $\bmgn$-action on $X_r$.
\ppar

Now suppose $\phi \in \Hom(\pi, \GL{r,\C})$. We define
\begin{align*}
    \bmgn(\phi) := & \{ f \in \bmgn \, \vert\,  f \cdot \phi = \phi 
            \quad\text{in $\Hom(\pi, \GL{r,\C})$} \}, \quad \text{and} \\
    \bmgn[ \phi ] := & \{ f \in \bmgn \, \vert\,  f \cdot [ \phi ] = [ \phi ] 
            \quad\text{in $X_r$} \}
\end{align*}
where $[ \phi ]$ denotes the representing class of $\phi$ in $X_r$.
For any complex vector space $W$, we denote by $\GL{W}$ the group of 
the linear automorphisms of $W$.
\begin{lemma} \label{lem}
    Suppose $g \ge 0$, $n \ge 0$, and $r \ge 1$.
    Let $\phi \in \Hom(\pi, \GL{r,\C})$, and $W_\phi$ denote the
    subspace of the vector space $\End{r,\C}$ spanned by the image of 
    $\phi$.  Then the following holds.
    \begin{enumerate}
        \item[(1)] There exists a unique linear representation 
                $$ \rho_\phi: \bmgn[\phi] \to \GL{W_\phi}$$
            which satisfies 
            \begin{equation} \label{C-1}
                \rho_\phi(f) (\phi(\gamma)) = \phi(f_*\gamma)
            \end{equation}
            for any $f \in \bmgn[\phi]$ and $\gamma \in \pi$.
        \item[(2)] The representation $\rho_\phi$ is either reducible or one-dimensional.
        \item[(3)] $\ker{\rho_\phi} = \bmgn(\phi)$.
    \end{enumerate}
\end{lemma}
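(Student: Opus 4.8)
The plan is to realize each $\rho_\phi(f)$ concretely as a conjugation on the matrix algebra, restricted to $W_\phi$. First I would unpack the defining condition of the stabilizer $\bmgn[\phi]$: for $f \in \bmgn[\phi]$ the classes $[f\cdot\phi]$ and $[\phi]$ agree in $X_r$, so there is some $C_f \in \GL{r,\C}$ with $(f\cdot\phi)(\gamma) = C_f\,\phi(\gamma)\,C_f^{-1}$ for all $\gamma \in \pi$. Since $(f\cdot\phi)(\gamma) = \phi(f_*^{-1}\gamma)$, reindexing $\gamma \mapsto f_*\gamma$ turns this into $\phi(f_*\gamma) = C_f^{-1}\phi(\gamma)C_f$. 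This identity is precisely \eqref{C-1} with $\rho_\phi(f)$ taken to be the conjugation map $A \mapsto C_f^{-1}AC_f$ on $\End{r,\C}$, so my candidate for $\rho_\phi(f)$ is the restriction of this conjugation to $W_\phi$.

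The technical heart, and the step I expect to require the most care, is to confirm that this prescription yields a well-defined element of $\GL{W_\phi}$ satisfying \eqref{C-1}, and that it is forced to be unique. Uniqueness is immediate: by construction $W_\phi$ is spanned by $\{\phi(\gamma) : \gamma \in \pi\}$, so any linear map obeying \eqref{C-1} is pinned down on a spanning set. For well-definedness I would argue that conjugation by $C_f^{-1}$ preserves $W_\phi$, since it sends each spanning vector $\phi(\gamma)$ to $\phi(f_*\gamma)$, which again lies in the image of $\phi$ and hence in $W_\phi$; as $f_*$ is a bijection of $\pi$, the vectors $\phi(f_*\gamma)$ again span $W_\phi$, so the restricted map is surjective, hence invertible, on the finite-dimensional space $W_\phi$. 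I would also check that $\rho_\phi(f)$ is independent of the choice of $C_f$, since two such conjugators differ by an element centralizing $\phi(\pi)$, which therefore centralizes all of $W_\phi$ and acts trivially after restriction. Equivalently, one may bypass $C_f$ and verify directly that every linear relation $\sum_i c_i\phi(\gamma_i)=0$ is preserved, because $\sum_i c_i\phi(f_*\gamma_i) = C_f^{-1}\bigl(\sum_i c_i\phi(\gamma_i)\bigr)C_f = 0$; this is where the conjugation structure supplied by $f \in \bmgn[\phi]$ is essential, and is what makes the construction possible at all.

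With $\rho_\phi$ in hand, the remaining claims are short. For the homomorphism property I would evaluate both sides on the spanning set: using $(fg)_* = f_* g_*$ one gets $\rho_\phi(fg)(\phi(\gamma)) = \phi(f_*g_*\gamma) = \rho_\phi(f)\bigl(\rho_\phi(g)(\phi(\gamma))\bigr)$, so the two linear maps coincide. For (2), the key observation is that every $\rho_\phi(f)$ acts by conjugation and hence fixes the scalar matrices: the line $\C\cdot I_r = \C\cdot\phi(1)$ is a $\rho_\phi$-invariant subspace, and since conjugation preserves the trace it splits off the trace-zero part $W_\phi \cap \mathfrak{sl}(r,\C)$ as an invariant complement, exhibiting $\rho_\phi$ as reducible whenever $\dim_\C W_\phi \ge 2$ (the remaining case $\dim_\C W_\phi = 1$ being that of a scalar-valued $\phi$). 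Finally, for (3) I would simply compare definitions: $f \in \ker\rho_\phi$ means $\rho_\phi(f)$ fixes every $\phi(\gamma)$, i.e.\ $\phi(f_*\gamma) = \phi(\gamma)$ for all $\gamma$; as $f_*$ is a bijection this is equivalent to $\phi\circ f_*^{-1} = \phi$, which is exactly the condition $f\cdot\phi = \phi$ defining $\bmgn(\phi)$. Hence $\ker\rho_\phi = \bmgn(\phi)$.
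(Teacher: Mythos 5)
Your proposal is correct and takes essentially the same route as the paper: existence and uniqueness of $\rho_\phi(f)$ via conjugation by a matrix witnessing $[f\cdot\phi]=[\phi]$, restricted to $W_\phi$ and pinned down by the spanning set $\{\phi(\gamma)\}$; reducibility via the invariant line $\C\cdot\phi(1)=\C\cdot I$; and the kernel identified by direct comparison of definitions. Your additional checks (independence of the choice of conjugator, invertibility of the restricted map, the trace-zero complement) are refinements the paper leaves implicit, and you are in fact slightly more careful than the paper in flagging the degenerate case $\dim_\C W_\phi = 1$, where the invariant line is not a proper subspace.
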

\ppar

It is worth emphasizing that the existence of the representation 
$\rho_\phi$ does not require the semisimplicity of $\phi$.
\ppar

\begin{proof}[Proof of Lemma \ref{lem}]
    (1) The uniqueness of $\rho_\phi$ follows from the requirement that for each 
    $f \in \bmgn[\phi]$, $\rho_\phi(f)$ is a linear automorphism of $W_\phi$ satisfying
    the rule $\rho_\phi(f)(\phi(\gamma)) = \phi(f_*\gamma)$ for all $\gamma \in \pi$. 
    Since the elements $\phi(\gamma)$s' span $W_\phi$, this determines $\rho_\phi(f)$ 
    uniquely. The existence of such a linear automorphism is guaranteed because 
    $f \in \bmgn[\phi]$ implies that $f^{-1} \cdot [\phi] = [\phi]$ in $X_r$, so there exists 
    $A_f \in \GL{r,\C}$ such that $\phi(f_* \gamma) = A_f \phi(\gamma) A_f^{-1}$ for 
    all $\gamma \in \pi$. The map $X \mapsto A_f X A_f^{-1}$ defines a linear automorphism 
    of $\End{r,\C}$, and its restriction to $W_\phi$ obtains the required linear
    isomorphism $\rho_\phi(f)$ of $W_\phi$. To see the assignment $f \mapsto \rho_\phi(f)$ 
    defines a linear representation, we only need to check that $\rho_\phi$ is a group homomorphism. For $f, g \in \bmgn[\phi]$ and any $\gamma \in \pi$, we have
    \begin{equation*}
        \rho_\phi(fg) (\phi(\gamma)) = \phi((fg)_*\gamma) = \phi(f_*(g_*\gamma)) 
            = \rho_\phi(f)(\phi(g_*\gamma)) = \rho_\phi(f)\rho_\phi(g)(\phi(\gamma)).
    \end{equation*}
    Since $W_\phi$ is spanned by $\phi(\gamma)$s', this shows $\rho_\phi$ is a group 
    homomorphism.
    \ppar
    (2) If $\dim{W_\phi} \geq 2$, the representation $\rho_\phi$ is reducible 
    because $W_\phi$ contains an invariant $1$-dimensional subspace spanned by 
    $\phi(1) = I \in \End{r,\C}$.

    (3) The kernel of $\rho_\phi$ consists of those $f \in \bmgn[\phi]$ for which $\rho_\phi(f)$ is the identity on $W_\phi$, i.e., $\phi(f_* \gamma) = \phi(\gamma)$ for all $\gamma \in \pi$. This is precisely the subgroup $\bmgn(\phi)$.

\end{proof}
\ppar

\begin{remark}
    (1) The subspace $W_\phi$ is in fact a $\C$-subalgebra of $\End{r,\C}$,
    and for each $f \in \bmgn[\phi]$, the map $\rho_\phi(f)$
    is a $\C$-algebra automorphism of $W_\phi$. It is therefore natural to
    ask whether a given $f \in \bmgn$ must fix the conjugacy class of
    $\phi$, provided a $\C$-algebra automorphism $F$ of $W_\phi$ making 
    the following diagram commute:
    \[
    \xymatrix{
                  \pi \ar[r]^{f_*} \ar[d]_{\phi} & \pi \ar[d]^\phi \\
                  W_\phi \ar[r]_{F} & W_\phi
                }
                  \]
    In general, this does not hold. However, when $\phi$ is 
    irreducible, the answer is affirmative. 
Indeed, if $\phi$ is irreducible, then $W_\phi = \End{r,\C}$
    by the Jacobson density theorem. Moreover, the Skolem--Noether theorem
    implies that any $\C$-algebra automorphism of $\End{r,\C}$ is inner
    (see \cite[p.174]{Lorenz}, for instance). Therefore, for any such
    automorphism $F$ commuting with the above diagram, there exists
    $A \in \GL{r,\C}$ such that
    $F(X) = A X A^{-1}$ for all $X \in W_\phi$. Hence we have
    $$ \phi(f_*\gamma) = F(\phi(\gamma)) = A \phi(\gamma) A^{-1} $$
    for all $\gamma \in \pi$, which shows that $f \in \bmgn[\phi]$.
    \ppar

    (2) The representation $\rho_\phi$ was first introduced by the author 
    \cite{visualization} in the case $g \ge 2$, $n=0$, and $\bmgzero[\phi] = \bmgzero$. 
    Using $\rho_\phi$, the author showed that the existence of a global fixed point for 
    the $\bmgzero$-action on $X_r$, which is represented by a faithful representation of 
    $\pi$, implies that $\bmgzero$ itself admits a faithful finite-dimensional 
    linear representation.
\end{remark}
\ppar \section{Preliminaries} \label{preliminaries}\par

\subsection{Representations of pure mapping class group} \par

Low-dimensional linear representations of the pure mapping class group $\pmgn$
have been studied by several authors, and in particular, are classified up to dimension 
$2g$ as follows:
\begin{theorem} \label{ldl}
    Let $g \ge 1$, and $n \ge 0$. Suppose that $\rho: \pmgn \to \GL{d, \C}$ is 
    a linear representation.
    \begin{enumerate}
        \item (Franks--Handel \cite{FH}) If $d < 2g$, then the image of $\rho$ 
                    is abelian.
        \item (Korkmaz \cite{Korkmaz}) Let further $g \ge 3$. If $d = 2g$, 
            then either $\rho$ is trivial or conjugate to the symplectic 
            representation which corresponds to the natural action
                    on $H_1(\Sigma_g^0; \C)$ via the homomorphism $\pmgn \to \mgzero$
                    induced by the inclusion $\sgn \hookrightarrow \sgzero$.
    \end{enumerate}
\end{theorem}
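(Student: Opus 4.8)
The plan is to work from the standard presentation of $\pmgn$ by Dehn twists and the relations among them. Since $\pmgn$ has finite index in $\mgn$, and $\mgn$ is generated by Dehn twists about simple closed curves (a Humphries-type generating system, augmented for the punctures and point-pushing maps, the latter themselves being products of twists), it suffices to understand how the generators $\rho(T_c)$ can behave. Three relations do the controlling: twists about disjoint curves commute, twists about curves meeting once satisfy the braid relation $T_aT_bT_a=T_bT_aT_b$, and the lantern relation expresses a product of four boundary twists of a four-holed sphere as a product of three interior twists. The model to keep in mind throughout is the symplectic representation, in which $T_c$ acts on $H_1(\sgzero;\C)\cong\C^{2g}$ by the transvection $v\mapsto v+\langle v,[c]\rangle[c]$, a unipotent element with $\rho(T_c)-I$ of rank one. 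The whole strategy is to show that, within the available dimension, a representation cannot deviate from this model without collapsing to abelian image.

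First I would pin down the local structure of a single $\rho(T_c)$. Because all twists about nonseparating curves are conjugate in $\mgn$, the matrices $\rho(T_c)$ share a common spectrum. Fixing curves $a,b$ meeting once, the pair $(\rho(T_a),\rho(T_b))$ generates a representation of the three-strand braid group by conjugate elements, whose low-dimensional theory forces the spectrum of $\rho(T_c)$ to take at most two values and bounds the rank of $\rho(T_c)-\lambda I$ for the dominant eigenvalue $\lambda$. Feeding this into the lantern relation, where four commuting twists equal three pairwise-braiding twists, pins down the product of eigenvalues and forces the transvection-type generators to be unipotent, matching the model. A dimension count then finishes part (1): a consistent nontrivial assignment of transvection data along a full chain $c_1,\dots,c_{2g}$ of curves, whose classes span $H_1(\sgzero;\C)$, requires at least $2g$ dimensions; hence if $r<2g$ every $\rho(T_c)$ must be scalar, the generators commute, and the image is abelian.

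For part (2) with $g\ge3$ and $r=2g$, the dimension is exactly large enough to house the symplectic transvections and nothing further. Assuming $\rho$ nontrivial, the spectral bookkeeping above forces each $\rho(T_c)$ to be a unipotent transvection with $\rho(T_c)-I$ of rank one, and the commuting-versus-braiding pattern identifies the resulting rank-one data with the homological intersection form. A rigidity step then matches $\rho$ with the action on $H_1(\sgzero;\C)$: the lines $\operatorname{im}(\rho(T_{c_i})-I)$ span a $2g$-dimensional module carrying an invariant symplectic pairing, and $\pmgn$-equivariance identifies it with $H_1(\sgzero;\C)$, so $\rho$ is conjugate to the symplectic representation, which indeed factors through $\pmgn\to\mgzero$ as stated. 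The punctures contribute nothing, since twists about puncture-surrounding and separating curves are null-homologous on $\sgzero$ and the same relations force them into $\ker\rho$.

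The hard part will be the joint eigenvalue–multiplicity bookkeeping at the threshold $r=2g$: one must rule out every exotic spectrum for $\rho(T_c)$ and every way the rank-one data could fail to assemble into the standard symplectic form. This is exactly where $g\ge3$ becomes indispensable, since it guarantees enough curves for the lantern and chain relations to bite and that $\pmgn$ has no extra abelian quotient in the relevant range; the genuinely different behavior for $g\le2$ is what restricts part (2) to $g\ge3$. Distinguishing the total collapse at $r<2g$ from the unique survivor at $r=2g$ is the delicate heart of the argument.
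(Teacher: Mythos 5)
This statement is not proved in the paper at all: it is quoted as an external input, with part (1) cited to Franks--Handel \cite{FH} and part (2) to Korkmaz \cite{korkmaz}, and the paper's contribution is precisely to use these classifications as black boxes. So the comparison here is really against the original proofs in those references, each of which is a substantial paper. Your outline does echo the general strategy of that literature (all twists about nonseparating curves are conjugate, so their images share a spectrum; the braid, lantern, and chain relations constrain that spectrum; the symplectic transvection picture is the model to force), so the \emph{direction} is right. One small correction on the setup: you invoke conjugacy of twists in $\mgn$, but $\rho$ is only defined on $\pmgn$, so you need the (true, but worth stating) fact that any two nonseparating curves are related by a \emph{pure} mapping class.

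The genuine gap is that every step where the theorem is actually hard is asserted rather than proved. First, the claim that a pair of braiding conjugate matrices ``forces the spectrum of $\rho(T_c)$ to take at most two values and bounds the rank of $\rho(T_c)-\lambda I$'' is false as a statement about $B_3$-representations in isolation: the braid group has irreducible representations in every dimension with arbitrarily rich spectra, and the constraint only emerges from playing the global bound $r\le 2g$ against \emph{many} interlocking curves simultaneously --- which is exactly the content of Franks--Handel's argument, not a lemma you can quote. Second, your dimension count for part (1) presupposes that each $\rho(T_c)$ is either scalar or of transvection type ($\rho(T_c)-\lambda I$ of rank one); ruling out all other Jordan/spectral structures is the bulk of the work, and the linear independence of the vectors $\operatorname{im}(\rho(T_{c_i})-\lambda I)$ along a chain also requires proof. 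Third, the rigidity step in part (2) --- that the rank-one data carries an invariant pairing which is $\pmgn$-equivariantly identified with the intersection form on $H_1(\sgzero;\C)$, so that $\rho$ is \emph{conjugate} to the symplectic representation rather than merely resembling it --- is the delicate heart of Korkmaz's proof and occupies most of his paper; here it appears as a single sentence. The same applies to the claim that separating and puncture-surrounding twists are forced into $\ker\rho$. In short, the proposal is a reasonable roadmap of the known proofs, but as a proof it defers precisely the points that make the theorem a theorem; none of them can be filled in by routine arguments within the stated framework.
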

\ppar
It is known that the abelianization of $\pmgn$ is trivial for $g \ge 3$.
It is also a well-known fact that the symplectic representation is irreducible (see also
\cite[Remark 2.9]{crossed} for a direct proof using Theorem \ref{ldl}). Therefore, in the case $g \ge 3$, 
we have the following corollary:
\begin{cor} \label{ldl-cor}
    Let $g \ge 3$, $n \ge 0$, and $d \le 2g$. Then any linear 
    representation $\rho: \pmgn \to \GL{d, \C}$ is trivial if $d=1$ or $\rho$ is reducible.
\end{cor}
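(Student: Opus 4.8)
The plan is to dichotomize on the value of $r$, applying the two parts of Theorem \ref{ldl} in the two regimes $r < 2g$ and $r = 2g$. In both cases the reducibility hypothesis plays a supporting role at most; the real content is already packaged in Theorem \ref{ldl} together with the two facts recorded just above it (triviality of the abelianization of $\pmgn$ for $g \ge 3$, and irreducibility of the symplectic representation).

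First I would dispose of the case $r < 2g$. Here part (1) of Theorem \ref{ldl}, due to Franks--Handel, tells us that the image $\rho(\pmgn)$ is abelian, so $\rho$ factors through the abelianization of $\pmgn$. Since $g \ge 3$, that abelianization is trivial, and therefore $\rho$ is itself trivial. I would point out that this step does not even invoke reducibility of $\rho$; any representation in this dimension range is automatically trivial.

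Next I would treat the boundary case $r = 2g$, which is where the reducibility hypothesis is genuinely needed. Because $g \ge 3$, part (2) of Theorem \ref{ldl}, due to Korkmaz, applies and forces $\rho$ to be either trivial or conjugate to the symplectic representation. The latter possibility is \emph{irreducible}, hence incompatible with the standing assumption that $\rho$ is reducible. Consequently $\rho$ must be trivial in this case as well, and combining the two cases proves the corollary.

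I do not anticipate any serious obstacle: the argument is essentially a bookkeeping of cases against the classification in Theorem \ref{ldl}. If anything requires care, it is only the clean invocation in the $r = 2g$ case, where the role of the reducibility assumption is precisely to eliminate the irreducible symplectic representation, leaving the trivial representation as the only option permitted by Korkmaz's theorem.
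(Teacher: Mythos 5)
Your proof is correct and follows essentially the same route as the paper: the corollary is deduced from Theorem \ref{ldl} exactly by combining the triviality of the abelianization of $\pmgn$ for $g \ge 3$ (handling $r < 2g$ via Franks--Handel) with the irreducibility of the symplectic representation (ruling out the nontrivial alternative in Korkmaz's classification when $r = 2g$). Your observation that reducibility is only needed in the boundary case $r = 2g$ is accurate and matches the logic implicit in the paper's presentation.
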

\ppar

\begin{remark} \label{remark_2g+1}
    We note that the consequence of Corollary \ref{ldl-cor} does not hold for $d \leq 2g+1$.
    For instance, the direct sum of the trivial representation and the symplectic
    representation gives a reducible $2g+1$-dimensional linear representation with infinite 
    image. Furthermore, for $g \geq 1$, there exists a reducible linear representation 
    $\rho: \pbmgn \to \GL{2g+1,\C}$ whose restriction to the point-pushing subgroup
    $\iota(\pi_1(\sgn,x))$ has infinite image (see \cite[Example 10.1.6(3)]{LL} 
    and \cite{crossed}). It follows from \eqref{loop_action} that
    the composition $\rho \circ \iota: \pi_1(\sgn,x) \to \GL{2g+1,\C}$ 
    determines a global fixed point of the $\pbmgn$-action on $X_{2g+1}$ with infinite 
    image, and hence also a global fixed point of the $\pmgn$-action on $X_{2g+1}$.
    On the other hand, at least for $g \ge 7$, it is known that there are no irreducible
    representations of dimension $2g+1$ (see \cite{crossed}).
\end{remark}
\ppar

\subsection{Finite orbits in representation space} \label{prelim2} \par

Before the work of  \cite{BGMW},
Biswas--Koberda--Mj--Santharoubane \cite{BKMS} studied the finiteness 
problem for images of representations lying in finite orbits of the 
$\pbmgn$--action on the representation space. By appealing to Chevalley--Weil theory,
they completely solved the problem without any
assumption on the dimension of the representations:
\begin{theorem}[\cite{BKMS}] \label{forepsp}
    Let $g \ge 1$, $n \ge 0$, and $r \ge 1$. Then every representation lying in 
    a finite orbit of the $\pbmgn$-action on $\Hom(\pi, \GL{r,\C})$ has finite image.
\end{theorem}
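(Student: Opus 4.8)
The plan is to play the point-pushing part of the Birman exact sequence against the Chevalley--Weil description of the homology of a finite cover. Let $\phi \in \Hom(\pi, \GL{r,\C})$ lie in a finite $\pbmgn$-orbit, and write $G = \phi(\pi)$ for its image. First I would exploit the point-pushing subgroup $\iota(\pi) \subseteq \pbmgn$. Since $\iota(\alpha)$ acts on $\pi$ as the inner automorphism by $\alpha$, one finds that $\iota(\alpha) \cdot \phi$ is the conjugate of $\phi$ by the matrix $\phi(\alpha)$; hence the $\iota(\pi)$-orbit of $\phi$ is the set of $G$-conjugates of $\phi$, and two elements $\phi(\alpha), \phi(\beta)$ give the same conjugate precisely when they differ by an element centralizing $G$. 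Finiteness of the orbit therefore forces $[G : Z(G)] < \infty$, where $Z(G)$ denotes the centre of $G$. Since $G$ is finitely generated (being a quotient of $\pi$), its finite-index subgroup $Z(G)$ is finitely generated as well; thus $Z(G)$ is a finitely generated abelian group, and it will suffice to prove that it is finite.

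Next I would pass to the finite cover on which $\phi$ becomes central. Set $N = \phi^{-1}(Z(G)) \trianglelefteq \pi$; then $\pi/N \cong G/Z(G) =: Q$ is finite, so $N = \pi_1(\Sigma')$ for a finite regular cover $\Sigma' \to \sgn$ with deck group $Q$, and $\phi|_N : N \to Z(G)$ is an abelian representation taking values in the centre of $G$. Let $\Gamma = \mathrm{Stab}_{\pbmgn}(\phi)$, which is of finite index because the orbit is finite. Every $f \in \Gamma$ satisfies $\phi \circ f_* = \phi$, so $\Gamma$ preserves $N$ and therefore lifts to mapping classes of $\Sigma'$ acting on $H_1(\Sigma'; \C)$, fixing at the same time the eigen-character decomposition of $\phi|_N$. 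Because $Z(G)$ is central in $G$, conjugation by $G$ acts trivially on it, so these eigen-characters are $Q$-invariant; their semisimple parts thus have finite $\Gamma$-orbit in the character torus $\Hom(H_1(\Sigma';\Z), \C^*)$, while their unipotent parts have finite $\Gamma$-orbit in $H^1(\Sigma'; \C)$.

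This is where Chevalley--Weil theory enters: it computes $H_1(\Sigma'; \C)$ as a $Q$-module and, since $g \ge 1$, guarantees that on each relevant isotypic component the lifted action of $\Gamma$ factors through a finite-index subgroup of an arithmetic group --- the symplectic group $\mathrm{Sp}(2g', \Z)$ of the cover in the closed case --- acting on the standard module without nonzero invariant functionals. For such an action one has $\sum_{\gamma \in \Gamma}\mathrm{im}(\gamma - 1) = H_1(\Sigma';\C)$, and an elementary argument then shows (after passing to the finite-index stabilizer of the point) that any character of $H_1(\Sigma';\Z)$ with finite $\Gamma$-orbit must be torsion and any cohomology class with finite $\Gamma$-orbit must vanish. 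Applying this to the data of $\phi|_N$ shows that every eigenvalue of every element of $Z(G)$ is a root of unity and that all unipotent parts are trivial. Hence $Z(G)$ is a finitely generated abelian group of semisimple matrices whose eigenvalues are roots of unity --- a finite group --- and since $[G : Z(G)] < \infty$, the image $G$ is finite.

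I expect the genuine obstacle to be the control of the lifted action on the homology of the cover: one must verify that $\Gamma$ acts on $H_1(\Sigma'; \C)$ with no nonzero invariant functionals on the pieces carrying our characters and classes, which is precisely what Chevalley--Weil supplies by making the relevant multiplicities large (and where the hypothesis $g \ge 1$ is used, together with the fact that central eigen-characters descend to the homology of the base $\sgn$). A secondary but essential delicacy is handling the non-semisimple part: the character-torus argument only constrains eigenvalues, so the vanishing of the unipotent parts genuinely requires the separate $H^1$ argument above. The punctured case $n \ge 1$ needs the analogous no-invariants statement for the arithmetic group acting on the homology of an open surface, where the peripheral classes are fixed but the non-split transvections mixing interior and peripheral classes still prevent nontrivial invariant functionals.
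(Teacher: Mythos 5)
Your first two steps are sound and are indeed the right way in: the point-pushing subgroup $\iota(\pi)\subseteq\pbmgn$ moves $\phi$ exactly through its $G$-conjugates, so finiteness of the orbit gives $[G:Z(G)]<\infty$, and the problem correctly reduces to showing that the finitely generated abelian group $Z(G)$ is finite, via the cover $\Sigma'\to\sgn$ attached to $N=\phi^{-1}(Z(G))$ and the $\Gamma$-invariant data on $H_1(\Sigma';\Z)$, where $\Gamma$ is the finite-index stabilizer of $\phi$. The gap is in the homological step that you yourself flag as the crux. You assert that Chevalley--Weil theory guarantees that the lifted $\Gamma$-action on $H_1(\Sigma';\C)$ factors through a finite-index subgroup of $\mathrm{Sp}(2g',\Z)$ of the cover with no nonzero invariant functionals, hence that the coinvariants of all of $H_1(\Sigma';\C)$ vanish. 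Chevalley--Weil supplies nothing of the sort: it computes $H_1(\Sigma';\C)$ as a module over the deck group $Q$ and is silent about the action of $\Gamma$. The finite-index assertion is in fact impossible in general: lifts of elements of $\Gamma$ normalize the deck group, so the image of the lifted action lies in the normalizer of the image $Q_*$ of $Q$ in $\GL{H_1(\Sigma';\C)}$, a proper algebraic subgroup of the symplectic group as soon as $Q$ acts by more than $\pm 1$, whereas finite-index subgroups of $\mathrm{Sp}(2g',\Z)$ are Zariski dense. And the weaker statement you actually use --- vanishing coinvariants (equivalently, no invariant functionals) for the lifted action of a finite-index subgroup on the homology of a finite cover --- is precisely the Putman--Wieland circle of questions, which is open; this very paper emphasizes the point in Section \ref{concluding} (Problem \ref{p1} and the discussion of \cite{PW}). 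So, as written, your proof rests on an open conjecture.

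The repair is already contained in your own parenthetical observation that the eigen-characters are $Q$-invariant because $\phi(N)=Z(G)$ is central in $G$: you never need the nontrivial isotypic components at all. $Q$-invariance means that the surjection $H_1(\Sigma';\Z)\to Z(G)$ (equivalently, each eigen-character and each unipotent class) factors through the $Q$-coinvariants of $H_1(\Sigma';\Z)$, which agree up to torsion with $H_1(\sgn;\Z)$ via the pushforward. On the base, the $\Gamma$-action genuinely does factor through a finite-index subgroup of $\mathrm{Sp}(2g,\Z)$ when $n=0$, and for $n>0$ the same conclusion holds because $H_1(\sgn;\Z)$ is generated by classes of nonseparating simple closed curves, against which one can play powers of Dehn twists lying in $\Gamma$; for $g\ge 1$ such actions have finite coinvariants --- an elementary fact requiring no Chevalley--Weil. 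Hence $Z(G)\otimes\mathbb{Q}=0$, so $Z(G)$ is finite and therefore $G$ is finite. For comparison: the paper does not prove this theorem but quotes it from \cite{BKMS}, and its Remark in Section \ref{preliminaries} indicates that the homological input behind the quoted proof is exactly this finiteness of coinvariants of the base module $H_1(\sgn;\Z)$ over finite-index subgroups of $\pbmgn$, not a no-invariants statement for the cover. (You could also bypass the cover and the matrix bookkeeping entirely: Schur's theorem turns $[G:Z(G)]<\infty$ into finiteness of $[G,G]$, and then $G^{\mathrm{ab}}$ is a quotient of the finite group of $\Gamma$-coinvariants of $H_1(\sgn;\Z)$.)
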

\ppar

They also observed that the following can be shown much more easily:

\begin{theorem}[\cite{BKMS}] \label{fprepsp}
    Let $g \ge 1$, $n \ge 0$, and $r \ge 1$. Then any global fixed point of the 
    $\pbmgn$-action on $\Hom(\pi, \GL{r,\C})$ corresponds to the trivial representation.
\end{theorem}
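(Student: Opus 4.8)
The plan is to exploit the fact that a fixed point in the representation space $\Hom(\pi,\GL{r,\C})$, as opposed to a fixed point in the deformation space $X_r$, is extremely rigid: the point-pushing subgroup alone already forces the image to be abelian. Throughout I read the $\pmgn$-action on $\Hom(\pi,\GL{r,\C})$ as the action of the preimage $\pbmgn = p^{-1}(\pmgn) \subseteq \bmgn$, so that a global fixed point $\phi$ is fixed by every element of $\pbmgn$, and in particular by every point-pushing map $\iota(\alpha)$ with $\alpha \in \pi$. This is the decisive difference from the main theorem, which lives on $X_r$ and where point-pushing gives nothing.

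First I would extract abelianness. Since $\iota(\alpha)$ acts on $\pi$ by the inner automorphism $\gamma \mapsto \alpha\gamma\alpha^{-1}$, the fixed-point condition $\iota(\alpha)\cdot\phi = \phi$ becomes $\phi(\alpha)\phi(\gamma)\phi(\alpha)^{-1} = \phi(\gamma)$ for all $\gamma \in \pi$. Letting $\alpha$ range over $\pi$, every $\phi(\alpha)$ is central in the image, so the image of $\phi$ is abelian and $\phi$ factors as $\phi = \bar\phi \circ \mathrm{ab}$ through the abelianization $\mathrm{ab}\colon \pi \to H_1(\sgn;\Z)$. Next, because inner automorphisms act trivially on homology, the $\pbmgn$-action on $H_1(\sgn;\Z)$ factors through $\pmgn$, and the invariance $\phi\circ f_* = \phi$ descends to $\bar\phi(f_* v) = \bar\phi(v)$ for all $f \in \pmgn$ and $v \in H_1(\sgn;\Z)$. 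Thus $\bar\phi$ kills the augmentation submodule $N = \langle f_* v - v : f \in \pmgn,\ v \in H_1(\sgn;\Z)\rangle$, i.e. it factors through the coinvariants $(H_1(\sgn;\Z))_{\pmgn}$. The theorem then reduces to the purely homological claim that these coinvariants vanish once $g \ge 1$.

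The hard part is this coinvariant computation, and it is precisely where the punctures require care. The radical of the intersection form on $H_1(\sgn;\Z)$ is spanned by the puncture classes $[\delta_1],\dots,[\delta_n]$ (subject to $\sum_i[\delta_i]=0$), and the quotient by the radical is $\Z^{2g}$ with the symplectic $\pmgn$-action through $\mgzero$. On this quotient the coinvariants vanish for $g\ge1$, since $\mathrm{Sp}(2g,\Z)$ is generated by transvections $T_c\colon v \mapsto v + \langle v,c\rangle\, c$, and $T_c(v)-v = \langle v,c\rangle\, c$ realizes every primitive class as an element of $N$. What remains, and is the genuinely new point, is that the puncture classes themselves lie in $N$, even though each $[\delta_i]$ is individually fixed by $\pmgn$. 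Here I would invoke the puncture-pushing maps: pushing the $i$-th puncture around a simple closed curve $c$ is a pure mapping class whose action on $H_1(\sgn;\Z)$ is the transvection $v \mapsto v + \langle v,[c]\rangle\,[\delta_i]$; taking $[c]=[a_1]$ gives $f_*[b_1]-[b_1] = \pm[\delta_i] \in N$. Hence every generator of the radical lies in $N$, so together with the symplectic step $N = H_1(\sgn;\Z)$ and the coinvariants vanish. Therefore $\bar\phi$, and with it $\phi$, is trivial. (For $n=0$ the radical is absent and only the symplectic step is needed; for $n=1$ the single puncture class is already null-homologous, so the puncture-pushing transvections become essential only when $n\ge2$.)
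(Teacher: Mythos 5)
Your proof is correct, and its skeleton---point-pushing forces the image of $\phi$ to be abelian, so $\phi$ factors through $H_1(\sgn;\Z)$, and invariance then reduces the theorem to the vanishing of the coinvariants of $H_1(\sgn;\Z)$ as a $\pbmgn$-module---is exactly the reduction the paper states as its proof. Where you genuinely diverge is in the coinvariant computation itself, the one step the paper leaves implicit. The paper (following \cite{BKMS}, and its remark on the punctured case) argues via generation: for $g\ge 1$ the module $H_1(\sgn;\Z)$ is generated by classes of oriented nonseparating simple closed curves; these form a single orbit under change of coordinates, so in the coinvariants they all become one element $t$, and since a sum such as $[a_1]+[b_1]$ is again represented by a nonseparating curve, $t+t=t$ and $t=0$. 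You instead decompose $H_1(\sgn;\Z)$ along the radical of the intersection form: Dehn-twist transvections dispose of the symplectic quotient, and puncture-pushing transvections $v\mapsto v+\langle v,[c]\rangle[\delta_i]$ place the puncture classes in the augmentation submodule. Both arguments are sound; yours is more computational but makes completely explicit where the punctures enter (only for $n\ge 2$, as you note), while the paper's route avoids any decomposition and any mention of puncture-pushing at the cost of invoking transitivity of the $\pbmgn$-action on nonseparating curves. One point to tighten in your write-up: the appeal to ``$\mathrm{Sp}(2g,\Z)$ is generated by transvections'' is not quite what you need, since knowing the action lands in $\mathrm{Sp}(2g,\Z)$ does not by itself put the transvections in the image of $\pmgn$. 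What does the work is either the standard surjectivity of $\pmgn\to\mathrm{Sp}(2g,\Z)$, or, more directly, the fact that every primitive class $c$ in the quotient is represented by a nonseparating simple closed curve in $\sgn$ whose Dehn twist (an element of $\pmgn$) acts as $T_c$; with that substitution the argument is complete.
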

\par
In fact, this theorem can be proved by just checking that the co-invariant module 
of the $\pbmgn$-module $H_1(\sgn; \Z)$ is zero.
\ppar
\begin{remark}
    In \cite{BKMS}, Theorems \ref{forepsp} and \ref{fprepsp} are stated
    only for the closed surface case $n=0$. It is further claimed that
    Theorem \ref{forepsp} does not hold for the case $n>0$, 
    with a counterexample provided when $g=0$. However, if $g \ge 1$, 
    the same proofs of both theorems remain valid
    for $n>0$ without any modification. The reason is that
    the co-invariant module of the $\pbmgn$-module $H_1(\sgn; \Z)$, as well as 
    the co-invariant module of $H_1(\sgn; \Z)$ as a module over any finite index subgroup
    of $\pbmgn$, behaves in exactly the same way as in the case $n=0$. This phenomenon 
    arises from the fact that, if $g \ge 1$, $H_1(\sgn; \Z)$ is generated by 
    the homology classes of oriented non-separating simple closed curves.
\end{remark}
\ppar
 \section{Proofs} \label{proofs}\par
\subsection{Proof of Theorem \ref{thm}} \label{proof:thm}\par

    Let $g \ge 3$, $n \ge 0$, and $r \le \sqrt{2g}$. Suppose 
    $\phi \in \Hom(\pi, \GL{r,\C})$ represents a global fixed point
    of the $\pmgn$-action on $X_r$. Then, by Lemma \ref{lem}, we 
    have a linear representation $\rho_\phi: \bmgn[\phi] \to \GL{W_\phi}$
    with $\dim{W_\phi} \le 2g$, which is either
    reducible or one-dimensional.
    Since $\pbmgn \subset \bmgn[\phi]$, we can restrict $\rho_\phi$ to
    $\pbmgn$ and obtain a linear representation of $\pbmgn$.
    As explained in Section \ref{construction}, $\pbmgn$ is isomorphic to $\pmgnp$, 
    and hence we can conclude
    that $\rho_\phi$ is trivial on $\pbmgn$ by Corollary \ref{ldl-cor}.
    Since $\ker{\rho_\phi} = \bmgn(\phi)$ by Lemma \ref{lem} (3), we see that
    $\phi$ is a global fixed point of the $\pbmgn$-action on $\Hom(\pi, \GL{r,\C})$.
    Therefore, $\phi$ is the trivial representation by Theorem \ref{fprepsp}. 
    This completes the proof of Theorem \ref{thm}.
\qed
\ppar
\subsection{Proof of Theorem \ref{exception}} \label{proof:exception}\par
    Let $g=2$ and $n \ge 0$. Suppose
    $\phi \in \Hom(\pi, \GL{2,\C})$ represents a global fixed point
    of the $\pmtwon$-action on $X_2$. As before, Lemma \ref{lem} yields a 
    linear representation $\rho_\phi: \bmtwon[\phi] \to \GL{W_\phi}$ 
    with $\dim{W_\phi} \le 4$, which is either reducible or one-dimensional.
    We denote its restriction to $\pbmtwon$ by $\rho: \pbmtwon \to \GL{W_\phi}$.
    We show that the image of $\rho$ is abelian. 
    \par
    If $\rho$ is one-dimensional, this is
    obvious. So we may assume $\dim{W_\phi} \geq 2$ and $\rho$ is reducible.
    We consider $W_\phi$ as a $\pbmtwon$-module via $\rho$. Since $\rho$ is reducible,
    there exists a nonzero $\pbmtwon$-invariant subspace $V \subset W_\phi$.
    Since $\dim{W_\phi} \le 4$, both $V$ and the quotient module $W_\phi/V$ have dimension
    at most $3$. Therefore, by Theorem \ref{ldl} (1), the linear representations of
    $\pbmtwon$ corresponding to the two $\pbmtwon$-modules $V$ and $W_\phi/V$ have
    abelian images. This implies, if we choose a basis of $W_\phi$ so as to extend a
    basis of $V$, that the matrix form of $\rho(f)$ has the form 
    $\begin{pmatrix} I & * \\ 0 & I \end{pmatrix}$ for any $f \in [\pbmtwon, \pbmtwon]$, 
    the commutator subgroup of $\pbmtwon$. As a result, we see that the image of
    $[\pbmtwon, \pbmtwon]$ under $\rho$ is an abelian group. On the other hand,
    it is known that $[\pbmtwon, \pbmtwon]$ is a perfect group (see \cite[Theorem 4.2]{KM}).
    Therefore, $\rho$ is trivial on $[\pbmtwon, \pbmtwon]$. Hence, the image of $\rho$ is
    abelian. 
    \par
    As is well-known, the abelianization of $\pbmtwon$ is a cyclic group of order $10$ 
    (see \cite{korkmaz02}). We see that the kernel of $\rho$ is a finite index subgroup 
    of $\pbmtwon$. Since $\ker{\rho} = \pbmtwon \cap \bmtwon(\phi)$ by Lemma \ref{lem} (3),
    it follows that the $\pbmtwon$-orbit of $\phi$ in $\Hom(\pi, \GL{2,\C})$ is finite.
    Then the finiteness of the image of $\phi$ follows from Theorem \ref{forepsp}.
    This completes the proof of Theorem \ref{exception}.
\qed
\ppar \section{Concluding Remarks} \label{concluding} \par

The proof of Theorem \ref{thm} shows that, in low dimensions, the global 
fixed point problem in the deformation space can be reduced to a linear 
representation problem for the pure mapping class group with a marked point. 
More precisely, from any $[\phi]\in X_r$, Lemma \ref{lem} produces a
linear representation of the stabilizer of $[\phi]$ in $\bmgn$. If $[\phi]$
is a global fixed point of the $\pmgn$-action, 
this stabilizer contains the subgroup $\pbmgn \cong \pmgnp$. 
Moreover, the dimension of this 
representation is at most $r^2$. Thus, by restricting to $\pbmgn$, one can 
apply known results on low-dimensional linear representations of pure mapping 
class groups directly.
\ppar

This also makes clear where the same approach encounters difficulty for arbitrary finite
orbits. If $[\phi]$ lies only in a finite $\mgn$-orbit, then its stabilizer in $\pbmgn$ is
only a finite index subgroup of $\pbmgn$, and Lemma \ref{lem} yields a linear
representation only of that subgroup. Accordingly, any extension of the present argument
from global fixed points to arbitrary finite orbits would require information about
low-dimensional linear representations of finite index subgroups of $\pbmgn$.
\ppar

This leads naturally to the following problem.

\begin{problem} \label{p1}
    Let $g \geq 3$ and $n \geq 0$, and let $\Gamma$ be a finite index subgroup of
    $\pbmgn$. Is it true that every linear representation $\rho: \Gamma \to
    \GL{d, \C}$ has finite image for sufficiently small $d$?
\end{problem}

We note that the assumption $g \geq 3$ is necessary here, since for $g \leq 2$, 
there are counterexamples already when $d=1$ (see \cite{Tah} and \cite{McC}).
An affirmative answer to Problem \ref{p1} for finite index 
subgroups containing the point-pushing subgroup $\iota(\pi_1(\sgn,x))$, 
in the range $d < g+1$, would yield an alternative proof of Theorem \ref{L-L} in full 
generality. 
Indeed, in that situation one could combine Lemma \ref{lem} with Theorem \ref{forepsp}. 
More generally, any positive result on Problem \ref{p1} in dimension $\leq d$ would yield 
a corresponding finiteness result for finite mapping class group orbits in $X_r$
whenever $r^2 \leq d$, since Lemma \ref{lem} produces a representation of dimension at
most $r^2$.
\ppar

At the same time, one cannot expect a positive answer to Problem \ref{p1} in arbitrary
dimension. As noted in Remark \ref{remark_2g+1}, reducible representations of the full pure
mapping class group with infinite image already occur in dimension $2g+1$. More strikingly, Serv\'an's recent
work \cite{servan} shows that for even $g\ge 4$ there exist finite index subgroups of
$\pmgzero$ admitting $2(g-1)$-dimensional linear representations with infinite image. 
Thus, for finite index subgroups, the range in which one may hope for a finiteness statement 
is strictly more limited than for the full pure mapping class group.
\ppar

Even the case $d=1$ of Problem \ref{p1} remains open for $g\ge 3$.
Under the natural identification $\pbmgn \cong \pmgnp$, this case is equivalent to 
the assertion that no finite index subgroup of $\pmgnp$ admits a surjective homomorphism
onto $\Z$, which is precisely the Ivanov conjecture in this setting.
It is also worth noting that the Ivanov conjecture is closely related to the
Putman--Wieland conjecture on higher Prym representations \cite{PW}. 
\ppar

These considerations show that the main limitation of the present paper is not the
construction of the associated representation, but rather the current lack of a
satisfactory low-dimensional representation theory for finite index subgroups of pure
mapping class groups. From this perspective, Problem \ref{p1} appears to be the natural
next step if one wishes to extend the deformation space approach developed here beyond
global fixed points.
 \subsection*{Acknowledgements}\par
The author is grateful to Yuta Nozaki for a useful comment on Problem \ref{p1}, 
and to Carlos A. Serv\'an for helpful clarification regarding a point 
in his work \cite{servan}. He also wishes to thank Aaron Landesman for 
taking the time to discuss this work and for providing helpful comments. The author 
has been supported by JSPS KAKENHI Grant Number 25K07010. \providecommand{\bysame}{\leavevmode\hbox to3em{\hrulefill}\thinspace}
\providecommand{\href}[2]{#2}

 \vspace{12pt}
\noindent
\textsc{Department of Mathematics,
  Kochi University of Technology \\ Tosayamada,  Kami City, Kochi 782-8502 Japan} \\
E-mail: {\tt kasahara.yasushi@kochi-tech.ac.jp}
\end{document}